\title{Product closure of some \\ second-order modal logics}
\author{Jonathan Zvesper\footnote{Oxford University Computing Laboratory, Parks Road, Oxford OX1 3QD, UK \texttt{jonathan.zvesper@comlab.ox.ac.uk}}}
\newtheorem{theorem}{Theorem}
\newtheorem{defined}[theorem]{Definition}
\newtheorem{exa}[theorem]{Example}
\newtheorem{proposition}[theorem]{Proposition}
\newtheorem{lemma}[theorem]{Lemma}
\newtheorem{corollary}[theorem]{Corollary}
\newtheorem{remark}[theorem]{Remark}
\newtheorem{fact}[theorem]{Fact}
\newcommand{\implies}{\ensuremath{\supset}}
\newcommand{\dfn}[1]{\stackrel{\textup{\tiny df}}{#1}}
\newcommand{\df}[1]{\textbf{\emph{#1}}}
\newcommand{\restr}{|}
\begin{document}

\maketitle

\begin{abstract}

Product update is an operation on models introduced into epistemic logic in order to represent a broad class of informational events.
If adding modalities representing product update to a language does not alter its expressive power
then we say that the language is `\emph{closed for product update}'. 
The basic modal language is known to be closed for product update \cite{BMS, Gerbrandy_PhD}.
We establish that monadic second order logic is closed for product update (Theorem \ref{thm:prod}).
Our technique is to pass via an intermediate language with what we call `action nominals'.
We obtain as corollaries that propositionally quantified modal logic is closed for product update, as is the modal $\mu$-calculus.

\end{abstract}


\section{Preamble}

The semantic operation of \emph{product update} was introduced into epistemic logic as a 
generalisation of relativisation.
Where logics including relativisation operators can reason about \emph{public announcements},
logics with `action' product update operators enable us to reason about information change under a much broader class of informational events,
allowing for arbitrary levels of uncertainty amongst the agents modelled.
Adding action operators for product update into formal languages
therefore opened the door for the logical analysis of social and formal protocols,
including situations of deception and suspicion,
and more generally any kind of uncertainty concerning what informational event is taking place.

It has been established \cite{BMS} that
adding such action operators does not increase the expressivity of the basic modal language,
but does increase the expressivity of the language with common knowledge.
A natural question when looking at a modal language is whether adding operators for product update increases the expressivity or not.
We answer here three instances of that question,
for very expressive modal languages,
each of which has some sort of second-order quantification,
and so is not less expressive than the first-order correspondence language.
These languages are those of:
monadic second order logic (MSO), propositionally quantified modal logic, and the modal $\mu$-calculus.
In each case we find that the logics are closed for product update,
i.e.\ that adding action modalities does not increase the expressive power over the class of all (relational) models.
Our method is first to prove that MSO is product closed, for which we introduce a novel technique involving `action nominals';
and then to use characterisation results of the other two languages as fragments of MSO
to obtain the results concerning them as corollaries of the main result concerning MSO.

\section{Product closure}

We define in this section what we mean by `closure for relativisation' and `closure for product update'.
These two `closures' of a logic%
\footnote{Or, as shall say, of a \emph{language}, taking it for granted that we are talking about the logic of that language over the class of all relational models.}
are best defined as in terms of two semantic operations on its models.

If $\mathcal{M} = (\Omega, R, V)$ is a relational model\footnote{So with $\Omega$ a non-empty set, $R \subseteq \Omega \times \Omega$ and $V: W \rightarrow 2^W$.} and $A \subseteq \Omega$, we write $\mathcal{M}\restr A$ for the \df{relativisation} of $\mathcal{M}$ to $A$, defined as $(A, R\restr A, V\restr A)$, where $R\restr A \dfn{=} R \cap (A \times A)$ and $(V\restr A)(p) \dfn{=} V(p) \cap A$.
Then we say that a language $\mathcal{L}$ is \df{closed for relativisation} just when for any pair of formulas $\{\varphi, A\} \in \mathcal{L}$, there is a formula $\psi \in \mathcal{L}$ such that for any model $\mathcal{M}$, $\llbracket \psi \rrbracket^\mathcal{M} = \llbracket \varphi \rrbracket^{\mathcal{M} \restr \llbracket A \rrbracket^\mathcal{M}}$.  To spell this definition out: for any formula $\varphi$, there is a formula $\psi$ that is true in  model $\mathcal{M}$ just when $\varphi$ `will be' true in the relativised model $\mathcal{M} \restr \llbracket A \rrbracket^\mathcal{M}$.  A natural interpretation of relativisation in modal models where the accessibility relation is taken to be \emph{epistemic} is of an action of public announcement.  Public announcements were introduced and discussed in this context in \cite{Plaza_PA, GG, vB_Lonely}.  To repeat the temporal idiom, $\psi$ can be thought of as saying that $\varphi$ \emph{will be} the case \emph{after} announcement of $A$.

To say that a language $\mathcal{L}$ is closed for relativisation is equivalent to saying that if we were to enrich $\mathcal{L}$ with `relativisation modalities' then we would obtain a language with exactly the same expressive power as $\mathcal{L}$.  That is, consider another language $\mathcal{L}_!$, obtained from $\mathcal{L}$ by addition of a family of modalities $\langle ! A \rangle$ for each $A \in \mathcal{L}$, endowed with the following semantics:
\[
\mathcal{M}, \omega \vDash \langle !\psi \rangle \varphi \dfn{\Leftrightarrow} \mathcal{M}, \omega \vDash \psi \textup{ and } \mathcal{M} \restr \llbracket \psi \rrbracket^\mathcal{M}, \omega \vDash \varphi
\]
$\mathcal{L}$ is closed for relativisation iff $\mathcal{L}_!$ has precisely the same expressive power as $\mathcal{L}$.

One can show by a `compositional analysis' of its semantics that,
for example, the basic modal language $\mathcal{L}_\square$ is closed for relativisation.
That is, one gives so-called `reduction axioms', validities of the form $\langle !\psi \rangle O(\varphi) \equiv Q(\langle !\psi \rangle \varphi)$, for each connective $O$ of the language.
This technique, introduced for $\mathcal{L}_\square$ in \cite{Plaza_PA},
shows not only that a language is closed for relativisation but, when $Q$ is computable from $O$, that it is \emph{computably} so closed.

A similar technique can be employed to show that a language is closed for `product update'.  Product update was developed in the context of epistemic logic in \cite{Gerbrandy_PhD, BMS}.  Semantically, it is a more complicated operation than relativisation.  In order to define product update we need to define `event models', defined in terms of the language $\mathcal{L}$.  An \df{event model} is a tuple $(\Sigma, E, PRE)$, where $\Sigma$ is a finite non-empty set of `events', $E \subseteq \Sigma \times \Sigma$ is a relation over $\Sigma$ and $PRE_\_: \Sigma \rightarrow \mathcal{L}$ gives the `precondition' formula for each `event'.  Product update $\otimes$ is then a function that, given a model and an event model, returns a new model.
It is defined as follows:
\[
\mathcal{M} \otimes \mathcal{A} \dfn{=}
\left(\begin{array}{l}
\{(\omega, \alpha) \in \Omega \times A \mid \mathcal{M}, \omega \vDash Pre_\alpha \}, \\
\{ ((\omega, \alpha), (\omega', \alpha')) \mid \omega R \omega'~\&~\alpha E  \alpha' \}, \\
\{ (p, \{(\omega, \alpha) \mid \omega \in V(p) \}) \mid p \in PROP \} \\
\end{array}
\right)
\]
For any event model $\mathcal{A} = (\Sigma, E, PRE)$, we could expand the language $\mathcal{L}$ to a language $\mathcal{L}_\alpha$ by adding a collection of operators $\langle \alpha \rangle$ for each event $\alpha \in \Sigma$, with the following semantics:
\[
\mathcal{M}, \omega \vDash \langle \alpha \rangle \varphi \dfn{\Leftrightarrow} \mathcal{M}, \omega \vDash Pre_\alpha \textup{ and } \mathcal{M} \otimes \mathcal{A}, (\omega, \alpha) \vDash \varphi,
\]
We say that a language $\mathcal{L}$ is \df{closed for product update} just if any language $\mathcal{L}_\alpha$ formed in this way has the same expressive power as $\mathcal{L}$.  Equivalently (for all languages $\mathcal{L}$ that we consider), $\mathcal{L}$ is closed for product update just if for every $\psi \in \mathcal{L}$ and each event $\alpha$ there is some $\varphi \in \mathcal{L}$ such that in any model, $\llbracket \langle \alpha \rangle \psi \rrbracket = \llbracket \varphi \rrbracket$.

\section{MSO}

We now define the first logic that we will consider:
propositionally quantified modal logic with a global modality.
The language $\mathcal{L}_{\exists, U}$ is given by the following Backus-Naur form:
\[
 \varphi ::= p \mid \neg \varphi \mid \varphi \land \varphi \mid \square \varphi \mid \exists p. \varphi \mid U \varphi
\]
We assume, throughout the paper, standard abbreviations like $\forall$, $\implies$, $\lor$.
If $\mathcal{M} = (\Omega, R, V)$, then `$\mathcal{M}[p \mapsto X]$' denotes the model $(\Omega, R, V')$,
where $V'(p) = X$ and for all $q \neq p$, $V'(q) = V(q)$.
The semantics of $\mathcal{L}_{\exists, U}$ are as follows, where $\mathcal{M} = (\Omega, R, V)$ is a model.
\begin{eqnarray*}
\mathcal{M}, \omega \vDash p				& \dfn \Leftrightarrow & \omega \in V(p) \\
\mathcal{M}, \omega \vDash \neg \varphi		& \dfn \Leftrightarrow & \mathcal{M}, \omega \nvDash \varphi \\
\mathcal{M}, \omega \vDash \varphi \land \psi	& \dfn \Leftrightarrow & \mathcal{M}, \omega \vDash \varphi \textup{ and } \mathcal{M}, \omega \vDash \psi \\
\mathcal{M}, \omega \vDash \square \varphi	& \dfn \Leftrightarrow & \forall \omega' (\omega R \omega' \Rightarrow \omega' \vDash \varphi)\\
\mathcal{M}, \omega \vDash \exists p. \varphi	& \dfn{\Leftrightarrow} & \exists X \subseteq \Omega: \mathcal{M}[p \mapsto X], \, \omega \vDash \varphi \\
\mathcal{M}, \omega \vDash U \varphi		&\dfn{\Leftrightarrow} & \forall \omega' \in \Omega,\, \omega' \vDash \varphi
\end{eqnarray*}
The same language but without the universal modality $U$ was first studied in \cite{Fine_QML}, and we will consider it below in section \ref{sec:qml}.
The language $\mathcal{L}_{\exists, U}$ is known to be a translation into modal notation of monadic second-order logic $\mathcal{L}_{MSO}$.
That is: for every $\mathcal{L}_{MSO}$-formula $\varphi(x)$ with one free variable (``$\mathcal{L}_{MSO}$-type'')
there is a $\mathcal{L}_{\exists, U}$-formula true at those points of which $\varphi(x)$ is true, and vice-versa.
This is the recursive definition of $\mathcal{L}_{MSO}$:
\[
\varphi ::= Px \mid xRy \mid \neg \varphi \mid \varphi \land \varphi \mid \exists x \varphi \mid \exists P \varphi
\]
The semantics of the first-order part of $\mathcal{L}_{MSO}$ are standard,
and the propositional quantifier $\exists P$ has the natural interpretation,
as quantifying unrestrictedly over subsets of the domain.
The only slightly tricky clause of the translation between $\mathcal{L}_MSO$ and $\mathcal{L}_{\exists, U}$ is the following:
\[TR(\exists x \varphi(x)) = \exists p. (E p \land \forall q. (U(q \implies p) \implies U(p \implies q)) \land \varphi(p)).\]
This equivalence means that we prove about the expressivity of $\mathcal{\exists, U}$
can equivalently be read as being about $\mathcal{L}_{MSO}$-types, and we can refer to $\mathcal{L}_{\exists, U}$ as ``MSO''.

\section{MSO is product closed}

To warm up we will remark that $\mathcal{L}_{\exists, U}$ is closed under relativisation.
In order to do this we want a `reduction axiom' for the quantifier $\exists p$.
The following Fact states such a reduction axiom:
\begin{fact}
\label{fct:MSORel}
If $p$ does not occur in $A$, then:
\[
\vDash \langle !A \rangle \exists p. \varphi \equiv \exists p. (U(p \implies A) \land \langle !A \rangle \varphi)
\]
\end{fact}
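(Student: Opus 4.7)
The plan is to unfold the semantics of both sides of the biconditional and observe that, modulo a single commutation identity on models, they reduce to the same condition. For the left-hand side, $\mathcal{M},\omega \vDash \langle !A\rangle \exists p.\varphi$ iff $\mathcal{M},\omega \vDash A$ and there exists $X \subseteq \llbracket A \rrbracket^\mathcal{M}$ with $(\mathcal{M}\restr \llbracket A \rrbracket^\mathcal{M})[p\mapsto X],\omega \vDash \varphi$. For the right-hand side, writing $\mathcal{N} := \mathcal{M}[p\mapsto Y]$, we get: there exists $Y\subseteq\Omega$ such that $Y \subseteq \llbracket A \rrbracket^\mathcal{N}$, $\mathcal{N},\omega \vDash A$, and $\mathcal{N}\restr \llbracket A \rrbracket^\mathcal{N}, \omega \vDash \varphi$.

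The crux is that, since $p$ does not occur in $A$, a routine induction on $A$ gives $\llbracket A \rrbracket^{\mathcal{M}[p\mapsto Z]} = \llbracket A \rrbracket^\mathcal{M}$ for every $Z \subseteq \Omega$. This has two consequences on the right-hand side. First, ``$Y \subseteq \llbracket A \rrbracket^\mathcal{N}$'' simplifies to ``$Y \subseteq \llbracket A \rrbracket^\mathcal{M}$'', and $\mathcal{N},\omega \vDash A$ simplifies to $\mathcal{M},\omega \vDash A$. Second, for any $X \subseteq \llbracket A \rrbracket^\mathcal{M}$ one has the model identity
\[
(\mathcal{M}\restr \llbracket A \rrbracket^\mathcal{M})[p\mapsto X] \;=\; \mathcal{M}[p\mapsto X]\restr \llbracket A \rrbracket^\mathcal{M},
\]
which is immediate by inspecting carriers, accessibility relation, and valuation — the hypothesis $X \subseteq \llbracket A \rrbracket^\mathcal{M}$ ensures that the valuation of $p$ survives the restriction intact on both sides, while for $q \neq p$ both sides yield $V(q)\cap \llbracket A\rrbracket^\mathcal{M}$.

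Combining these, both sides of the desired equivalence reduce to the single condition: $\mathcal{M},\omega\vDash A$, and there exists $X\subseteq\llbracket A\rrbracket^\mathcal{M}$ such that $(\mathcal{M}\restr\llbracket A\rrbracket^\mathcal{M})[p\mapsto X],\omega \vDash \varphi$, by taking $X = Y$ and using the identity above to pass between the two formulations. There is no real obstacle here; the work is purely bookkeeping. The role of the hypothesis that $p$ does not occur in $A$ is to make relativisation and $p$-revaluation commute, and the role of the side condition $U(p \implies A)$ on the right is precisely to restrict the propositional witness $Y$ to lie within the announcement set $\llbracket A\rrbracket^\mathcal{M}$, so that the range of the quantifier on the right exactly matches the range on the left.
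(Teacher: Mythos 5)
Your proof is correct, and it is exactly the kind of argument the paper intends: the paper states Fact~\ref{fct:MSORel} without proof, but your semantic unfolding --- reducing both sides to the existence of a witness $X \subseteq \llbracket A \rrbracket^{\mathcal{M}}$, using the locality of $A$ in $p$ and the commutation of relativisation with $p$-revaluation --- is precisely the string-of-equivalences method the paper itself deploys for the product-update analogue, Lemma~\ref{lem:MSOProd}. You also correctly isolate the two points where care is needed (the hypothesis that $p$ does not occur in $A$, and the guard $U(p \implies A)$ matching the quantifier ranges), so there is nothing to add.
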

Fact \ref{fct:MSORel} plays the central part in the proof of Proposition \ref{prop:rel}.
\begin{proposition}
\label{prop:rel}
$\mathcal{L}_{\exists, U}$ is computably closed for relativisation.
\end{proposition}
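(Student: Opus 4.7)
The plan is to proceed by structural induction on the formula $\varphi$ in $\langle !A \rangle \varphi$, giving a \emph{reduction axiom} for each clause of the grammar of $\mathcal{L}_{\exists, U}$. That is, I would define a translation $t_A(\varphi)$ from $\mathcal{L}_{\exists, U}$ to $\mathcal{L}_{\exists, U}$ that is provably equivalent to $\langle !A \rangle \varphi$, together with a simultaneous recursion that handles the relativisation argument $A$ itself (which is a fixed $\mathcal{L}_{\exists, U}$-formula and does not need to be changed). Since each reduction axiom is a computable rewrite and the recursion is on the construction of $\varphi$, this will establish closure and yield a computable translation.

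For the propositional and modal connectives the axioms are the familiar Plaza-style ones: $\langle !A \rangle p \equiv A \land p$; $\langle !A \rangle \neg \varphi \equiv A \land \neg \langle !A \rangle \varphi$; $\langle !A \rangle (\varphi \land \psi) \equiv \langle !A \rangle \varphi \land \langle !A \rangle \psi$; and $\langle !A \rangle \square \varphi \equiv A \land \square(A \implies \langle !A \rangle \varphi)$. For the universal modality the analogous axiom is $\langle !A \rangle U \varphi \equiv A \land U(A \implies \langle !A \rangle \varphi)$, which is immediate from the semantics of $U$ and the definition of relativisation (relativising restricts the whole domain to $\llbracket A \rrbracket^\mathcal{M}$, so universal quantification over the restricted domain is simulated by guarding with $A$). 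Each of these is verified by unfolding the definition of $\langle !A \rangle$ and of the clause in question.

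The only non-trivial clause is $\exists p$, and this is exactly what Fact \ref{fct:MSORel} is designed to handle: $\langle !A \rangle \exists p. \varphi \equiv \exists p. (U(p \implies A) \land \langle !A \rangle \varphi)$, subject to the side-condition that $p$ does not occur in $A$. I would discharge this side-condition in the standard way, by first $\alpha$-renaming the bound variable $p$ to some fresh propositional letter not occurring in $A$, which is always possible and computable; the semantics of $\exists p$ make such renaming sound. After substituting $\langle !A \rangle \varphi$ by the inductive hypothesis, one obtains a pure $\mathcal{L}_{\exists, U}$-formula.

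The main (and essentially only) obstacle is justifying Fact \ref{fct:MSORel} itself: one must check that quantifying over subsets $X$ of the relativised domain $\llbracket A \rrbracket^\mathcal{M}$ corresponds precisely to quantifying in $\mathcal{M}$ over subsets $X$ with $X \subseteq \llbracket A \rrbracket^\mathcal{M}$, captured by the guard $U(p \implies A)$; and that the truth of $\varphi$ in $\mathcal{M}[p \mapsto X] \restr \llbracket A \rrbracket^{\mathcal{M}[p \mapsto X]}$ agrees with that in $(\mathcal{M} \restr \llbracket A \rrbracket^\mathcal{M})[p \mapsto X]$, which is where the side-condition on $p$ is used (so that $\llbracket A \rrbracket$ is unaffected by the valuation change). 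Modulo this verification, the induction goes through mechanically and the resulting translation is clearly computable.
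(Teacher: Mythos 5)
Your proposal is correct and is essentially the paper's own argument: the paper proves Proposition \ref{prop:rel} by exactly the reduction-axiom technique it describes for $\mathcal{L}_\square$, with Fact \ref{fct:MSORel} supplying the one new clause for $\exists p$, which is what you do. Your additional details (the Plaza-style axioms for the other connectives, the $U$-clause, the $\alpha$-renaming to meet the freshness side-condition, and the verification of Fact \ref{fct:MSORel} itself) correctly fill in steps the paper leaves implicit.
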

(Proposition \ref{prop:rel} follows from Fact \ref{fct:MSORel}, and in any case would be an immediate corollary of Theorem \ref{thm:prod} below.)

In the rest of this section we will work towards Theorem \ref{thm:prod}, which states that $\mathcal{L}_{\exists, U}$ is also (computably) closed for product update.
Fix some event model $(\Sigma, E, PRE)$ with $\Sigma = \{ \alpha_0,\ldots,\alpha_{n-1}\}$.  We will show that the language $\mathcal{L}_{\exists, U, \alpha}$ obtained by adding event modalities $\langle \alpha \rangle$ for each $\alpha \in \Sigma$ has precisely the same expressive power as the original language $\mathcal{L}_{\exists, U}$.

In order to show that $\mathcal{L}_{\exists, U}$ is closed for relativisation we were able to give a reduction axiom for the quantifier $\exists p$.
The case for product update is a little bit more subtle.
We will give a reduction axiom for the quantifier $\exists p$,
but we do not see how to do this directly in the language $\mathcal{L}_{\exists, U}$.
Therefore we take a detour via some additional rather artificial vocabulary that allows expressing a subset of the product space in terms of a sequence of subsets of the initial model.

This new vocabulary consists of nullary modalities $j_0, \ldots, j_{n}$.
Intuitively speaking, these action nominals will say, in the model $\mathcal{M} \times \mathcal{A}$, that action $\alpha_i$ has \textbf{just occurred}.  We call these $j_i$'s `\df{action nominals}'.
So the language $\mathcal{L}_{\alpha, j}$ is formed from the language $\mathcal{L}_{\alpha}$ by including the action nominals, but \emph{only when they occur under the scope of an action modality} $\langle \alpha \rangle$.
To formally specify the language $\mathcal{L}_{\exists, U, \alpha, j}$,
we consider first the set of sentences $\mathcal{S}_{\exists, U, \alpha, j}$,
for which we cannot formulate a semantics, and therefore do not use the word `language'.
\[
 \varphi ::= p \mid j_i \mid \neg \varphi \mid \varphi \land \varphi \mid \square \varphi \mid \exists p. \varphi \mid U \varphi \mid \langle \alpha \rangle \varphi.
\]
The problem in giving a semantics for $\mathcal{S}_{\exists, U, \alpha, j}$
is precisely that action nominals can occur outside of the scope of action modalities,
in which case we have no way to evaluate them.%
\footnote{One natural option might be to say that in general $\mathcal{M}, \omega \vDash j_i$ holds when there is some model $\mathcal{M}'$ with a point $\omega'$ such that $(\mathcal{M}' \otimes \mathcal{A}, (\omega', \alpha_i))$ is isomorphic to $(\mathcal{M}, \omega)$, but we have no need for such an option here.}
So we define the language $\mathcal{L}_{\exists, U, \alpha, j}$ as follows, where $\psi$ can take values in $\mathcal{S}_{\exists, U, \alpha, j}$:
\[
 \varphi ::= p \mid \neg \varphi \mid \varphi \land \varphi \mid \square \varphi \mid \exists p. \varphi \mid U \varphi \mid \langle \alpha \rangle \psi.
\]
We have said what the intuitive meaning of the action nominals is.
Formally, the semantics of action nominals are as follows:
\[
 \mathcal{M} \otimes \mathcal{A}, (\omega, \alpha) \vDash j_i \Leftrightarrow \alpha = \alpha_i
\]
Since action nominals can only occur under the scope of at least one action operator, this semantic clause is sufficient.
Notice that action nominals have simple reduction axioms:
\begin{remark}
\label{rem:JRed} The following equivalences hold:
\begin{eqnarray*}
\vDash & \langle \alpha_i \rangle j_i \equiv Pre_{\alpha_i} \\
\vDash & \langle \alpha_i \rangle j_k \equiv \bot & \textup{for } i \neq k.
\end{eqnarray*}
\end{remark}
More importantly, it is now possible to write down a reduction axiom for the quantifier $\exists p$:
\begin{lemma}
\label{lem:MSOProd}
If none of $p_0, \ldots, p_{n-1}$ occur in $\varphi$ or any $Pre_{\alpha_i}$, then:
\[
\vDash \langle \alpha \rangle \exists p. \varphi \equiv \exists p_0 \ldots \exists p_{n - 1}. (\bigwedge_{i \in n} U (p_i \implies Pre_{\alpha_i}) \land \langle \alpha \rangle \varphi(\bigvee_{i \in n}(p_i \land j_i)))
\]
\end{lemma}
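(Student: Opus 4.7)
The plan is to prove the biconditional by decomposing any subset $Y$ of the domain of $\mathcal{M} \otimes \mathcal{A}$ as a tuple of $n$ subsets of $\Omega$, one per event of $\mathcal{A}$, so that the second-order quantifier $\exists p$ at the level of the product can be simulated by $n$ quantifiers at the level of the base model. The action nominals $j_i$ do the work of recording which component of the decomposition applies at each point of the product.

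First I would unfold the semantics on both sides. The LHS holds at $(\mathcal{M}, \omega)$ iff $\omega \vDash Pre_\alpha$ and there is some $Y$ contained in the domain of $\mathcal{M} \otimes \mathcal{A}$ with $(\mathcal{M} \otimes \mathcal{A})[p \mapsto Y], (\omega, \alpha) \vDash \varphi$. The RHS holds iff there are $P_0, \ldots, P_{n-1} \subseteq \Omega$ with each $P_i \subseteq \llbracket Pre_{\alpha_i} \rrbracket^\mathcal{M}$ (which is what the conjuncts $U(p_i \supset Pre_{\alpha_i})$ say), together with $\mathcal{M}' \otimes \mathcal{A}, (\omega, \alpha) \vDash \varphi(\chi)$, where I abbreviate $\mathcal{M}' := \mathcal{M}[p_0 \mapsto P_0, \ldots, p_{n-1} \mapsto P_{n-1}]$ and $\chi := \bigvee_i (p_i \land j_i)$.

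The key bijection between the witnesses on the two sides is: given $Y$, set $P_i := \{\omega' : (\omega', \alpha_i) \in Y\}$; inversely, set $Y := \bigcup_i (P_i \times \{\alpha_i\})$. Since $(\omega', \alpha_i)$ lies in the domain of $\mathcal{M} \otimes \mathcal{A}$ exactly when $\omega' \vDash Pre_{\alpha_i}$, the constraint that $Y$ be a subset of the product-domain matches exactly the family of constraints $P_i \subseteq \llbracket Pre_{\alpha_i} \rrbracket^\mathcal{M}$. Next I would compute the extension of $\chi$ in $\mathcal{M}' \otimes \mathcal{A}$: at a point $(\omega', \alpha_k)$, $p_i$ holds iff $\omega' \in P_i$ and $j_i$ holds iff $i = k$, so $\chi$ holds iff $\omega' \in P_k$, iff $(\omega', \alpha_k) \in Y$. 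Hence $\llbracket \chi \rrbracket^{\mathcal{M}' \otimes \mathcal{A}} = Y$.

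The biconditional then follows from a routine MSO substitution lemma: $\mathcal{N}, w \vDash \varphi(\chi)$ iff $\mathcal{N}[p \mapsto \llbracket \chi \rrbracket^\mathcal{N}], w \vDash \varphi$, provided that no binder of $\varphi$ captures the free propositional variables of $\chi$. The freshness assumption that $p_0, \ldots, p_{n-1}$ occur in neither $\varphi$ nor any $Pre_{\alpha_i}$ ensures this, and also guarantees that $\mathcal{M}' \otimes \mathcal{A}$ agrees with $\mathcal{M} \otimes \mathcal{A}$ on everything in the vocabulary of $\varphi$. The main obstacle I expect is pinning down this substitution lemma cleanly in the presence of the action nominals, whose semantics depends on the ``current'' event: one must verify that the comparison of $\chi$'s extension with $Y$'s is made exactly at the level of the one-step product where $\exists p$ binds, so that the propagated values of the propositional variables $p_i$ combine correctly with the outermost $j_i$ to reconstruct $Y$ componentwise.
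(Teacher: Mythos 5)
Your proposal is correct and takes essentially the same route as the paper: the paper's proof is a chain of equivalences that implements exactly your witness bijection, decomposing $X$ into components $X_i \times \{\alpha_i\}$ with $X_i \subseteq \llbracket Pre_{\alpha_i} \rrbracket^{\mathcal{M}}$, using the action nominals to recombine them via $\bigvee_i (p_i \land j_i)$ and the conjuncts $U(p_i \supset Pre_{\alpha_i})$ to encode the domain constraints. The only difference is presentational: you isolate explicitly the substitution lemma (that $\varphi(\chi)$ holds iff $\varphi$ holds under $p \mapsto \llbracket \chi \rrbracket$, with freshness preventing capture and preserving the product domain) which the paper applies implicitly in its string of ``iff''s.
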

\begin{proof}
We can prove this equivalence directly by the following string of equivalences:
\[
 \mathcal{M}, \omega \vDash \langle \alpha \rangle \exists p. \varphi(p)
\]
iff
\[
\mathcal{M} \otimes \mathcal{A}, (\omega, \alpha) \vDash \exists p. \varphi(p)
\]
iff
\[
\exists X \subseteq \{ (\omega', \alpha') \mid \mathcal{M}, \omega' \vDash Pre_{\alpha'}  \}, \mathcal{M} \otimes \mathcal{A}[p \mapsto X], (\omega, \alpha) \vDash \varphi(p)
\]
iff
\[
\exists X \subseteq \bigcup_{i \in n} (\{ \omega' \in \Omega \mid \omega' \vDash Pre_{\alpha_i} \} \times \{ \alpha_i \}): 
\mathcal{M} \otimes \mathcal{A}[p \mapsto X], (\omega, \alpha) \vDash \varphi(p)
\]
iff
\[
\exists X_0 \in \{ \omega' \in \Omega \mid \omega' \vDash Pre_{\alpha_0}\}, \ldots, X_{n - 1} \in \{ \omega' \in \Omega \mid \omega' \vDash Pre_{\alpha_{n - 1}} \}:
\]
\[
\mathcal{M} \otimes \mathcal{A}[p \mapsto \bigcup X_i \times \{\alpha_i\}], (\omega, \alpha) \vDash \varphi(p)
\]
iff
\[
\exists X_0 \in \{ \omega' \in \Omega \mid \omega' \vDash Pre_{\alpha_0}\}, \ldots, X_{n - 1} \in \{ \omega' \in \Omega \mid \omega' \vDash Pre_{\alpha_{n - 1}} \}:
\]
\[
\mathcal{M} \otimes \mathcal{A}[p_0 \mapsto X_i \times \{\alpha_i\}, \ldots, p_{n - 1} \mapsto X_{n - 1} \times \{x_{n - 1}\}], (\omega, \alpha)
\vDash \varphi(\bigvee_{i \in n} p_i)
\]
(where $p_0, \ldots, p_{n - 1}$ are distinct and do not occur in $\varphi$ or any $Pre_{\alpha_i}$), iff
\[
\exists X_0\in\{\omega'\in\Omega \mid \omega' \vDash Pre_{\alpha_0}\}, \ldots, X_{n - 1} \in \{ \omega' \in \Omega \mid \omega' \vDash Pre_{\alpha_{n - 1}} \}:
\]
\[
\mathcal{M} \otimes \mathcal{A}[p_0 \mapsto X_i \times \Sigma, \ldots, p_{n - 1} \mapsto X_{n - 1} \times \Sigma], (\omega, \alpha)
\vDash \varphi(\bigvee_{i \in n} (p_i \land j_i))
\]
iff
\[
\exists X_0\in\{\omega'\in\Omega \mid \omega' \vDash Pre_{\alpha_0}\}, \ldots, X_{n - 1} \in \{ \omega' \in \Omega \mid \omega' \vDash Pre_{\alpha_{n - 1}} \}:
\]
\[
\mathcal{M}[p_0 \mapsto X_i, \ldots, p_{n - 1} \mapsto X_{n - 1}], \omega \vDash \langle \alpha \rangle \varphi(\bigvee_{i \in n}(p_i \land j_i))
\]
iff
\[
\exists X_0, \ldots, X_{n - 1} \subseteq \Omega:
\mathcal{M}, \omega \vDash \bigwedge_{i \in n} U(p_i \implies Pre_{\alpha_i}) \land \langle \alpha \rangle \varphi(\bigvee_{i \in n}(p_i \land j_i))
\]
iff
\[
\mathcal{M}, \omega, \vDash \exists p_0, \ldots, \exists p_{n - 1}.
(\bigwedge_{i \in n} U(p_i \implies Pre_{\alpha_i}) \land \langle \alpha \rangle \varphi(\bigvee_{i \in n}(p_i \land j_i)))
\]
\end{proof}

These reduction axioms
(from Remark \ref{rem:JRed} and Lemma \ref{lem:MSOProd})
are what we use to establish the product closure of $\mathcal{L}_{\exists, U}$.

\begin{theorem}
\label{thm:prod}
$\mathcal{L}_{\exists, U}$ is computably closed for product update.  I.e.:
\[
\forall \alpha \in \Sigma, \forall \psi \in \mathcal{L}_{\exists, U}, \exists \chi \in \mathcal{L}_{\exists, U}:~\vDash \chi \equiv \langle \alpha \rangle \psi\textup,
\]
where $\chi$ is effectively computable from $\psi$ and $(\Sigma, \alpha)$.
\end{theorem}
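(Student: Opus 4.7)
My plan is to supplement Remark \ref{rem:JRed} and Lemma \ref{lem:MSOProd} with reduction axioms for the remaining connectives of $\mathcal{L}_{\exists, U, \alpha, j}$, then define the required $\chi$ by a recursive rewriting procedure that pushes each action modality inward until it meets a base case. The extra axioms are of the familiar BMS shape and come directly from unpacking $\mathcal{M} \otimes \mathcal{A}$: $\vDash \langle \alpha \rangle p \equiv Pre_\alpha \land p$, $\vDash \langle \alpha \rangle \neg \varphi \equiv Pre_\alpha \land \neg \langle \alpha \rangle \varphi$, $\vDash \langle \alpha \rangle (\varphi \land \psi) \equiv \langle \alpha \rangle \varphi \land \langle \alpha \rangle \psi$, $\vDash \langle \alpha \rangle \square \varphi \equiv Pre_\alpha \land \square \bigwedge_{\alpha E \alpha'} (Pre_{\alpha'} \implies \langle \alpha' \rangle \varphi)$, and an analogous axiom for $U$ (with the conjunction ranging over all of $\Sigma$ in place of the $E$-successors of $\alpha$). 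Each is a two-line semantic calculation.

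Given these axioms, I would structure the translation in two layers. In the outer layer, I induct on the number of occurrences of $\langle \cdot \rangle$ in $\varphi$: pick an innermost action modality $\langle \alpha \rangle \psi$ (so $\psi$ contains no further action modalities, and hence every action nominal in it is governed by this $\langle \alpha \rangle$), translate it to an $\mathcal{L}_{\exists, U}$-formula, substitute back into $\varphi$, and iterate. In the inner layer, to translate a single $\langle \alpha \rangle \psi$, I recurse on the structure of $\psi$ and apply the appropriate axiom: if $\psi$ is an atom or action nominal, the $\langle \alpha \rangle$ is discharged outright (by the atom axiom or Remark \ref{rem:JRed}); otherwise the axiom reduces to subproblems of shape $\langle \alpha' \rangle \theta$ with $\theta$ a proper subformula of $\psi$, except in the $\exists p$ case, which by Lemma \ref{lem:MSOProd} yields the substituted formula $\varphi(\bigvee_{i \in n}(p_i \land j_i))$ under a fresh $\langle \alpha \rangle$.

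The hard part is termination of the inner recursion, because the $\exists p$ step produces a formula that is strictly \emph{larger} as a tree than the one it replaces. I would handle this by inducting on the lexicographic pair $(m_1(\psi), m_2(\psi))$, where $m_1$ counts occurrences of $\exists$, $\square$, $U$ and $m_2$ counts occurrences of $\neg$ and $\land$. The substitution $\varphi(\bigvee_i (p_i \land j_i))$ leaves $m_1$ untouched, since the inserted disjunct contains no quantifier or modality, so discarding the outer $\exists p$ strictly decreases $m_1$; each of the other axioms either eliminates $\langle \alpha \rangle$ outright or strictly decreases $m_1$ (for $\square$, $U$) or $m_2$ (for $\neg$, $\land$). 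Computability of $\chi$ from $\psi$ and $(\Sigma, \alpha)$ then follows immediately, since every rewrite step is effective and the lexicographic measure bounds the number of steps.
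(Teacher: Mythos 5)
Your proposal is correct and takes essentially the same route as the paper: the paper derives Theorem \ref{thm:prod} from Lemma \ref{lem:jElim}, whose proof is exactly your inner recursion --- the standard BMS reduction axioms for atoms, $\neg$, $\land$, $\square$, $U$ (which the paper leaves implicit as the `old' known cases), Remark \ref{rem:JRed} for action nominals, and Lemma \ref{lem:MSOProd} for $\exists p$, with termination secured by a lexicographic measure whose leading component is the quantifier count $\varepsilon(\psi)$, unchanged by the substitution $\gamma(\bigvee_{i}(p_i \land j_i))$. Your measure $(m_1, m_2)$ counting $\{\exists, \square, U\}$ then $\{\neg, \land\}$ is a harmless variant of the paper's pair (number of $\exists$'s, standard complexity), and your outer layer for nested action modalities is extra generality the theorem as stated does not require, since there $\psi \in \mathcal{L}_{\exists, U}$.
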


Theorem \ref{thm:prod} follows from Lemma \ref{lem:jElim} below, which is phrased in terms of the set of sentences $\mathcal{S}_{\exists, U, j}$,
defined recursively as:
\[
 \varphi ::= p \mid j_i \mid \neg \varphi \mid \varphi \land \varphi \mid \square \varphi \mid \exists p. \varphi \mid U \varphi
\]
It is in terms of $\mathcal{S}_{\exists, U, j}$, which clearly contains $\mathcal{L}_{\exists, U}$,
that we are able to prove a reduction lemma, Lemma \ref{lem:jElim} below.
Since $\mathcal{S}_{\exists, U, j}$ clearly contains $\mathcal{L}_{\exists, U}$, then Theorem \ref{thm:prod} is an immediate corollary of Lemma \ref{lem:jElim}.

\begin{lemma}
\label{lem:jElim}
\[
\forall \alpha \in \Sigma, \forall \psi \in \mathcal{S}_{\exists, U, j}, \exists \chi \in \mathcal{L}_{\exists, U}:~\vDash \chi \equiv \langle \alpha \rangle \psi
\]
\end{lemma}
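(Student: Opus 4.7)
The plan is to prove Lemma~\ref{lem:jElim} by induction on a well-founded complexity measure $\mu(\psi) = (q(\psi), c(\psi))$ ordered lexicographically, where $q(\psi)$ counts occurrences of the quantifier $\exists$ in $\psi$ and $c(\psi)$ counts occurrences of the other connectives $\neg$, $\land$, $\square$, $U$. The overall strategy is to supply a reduction axiom for each top-level connective of $\psi$ so that $\langle \alpha \rangle$ is pushed inward past that connective, converting the problem into reducing $\langle \alpha' \rangle \varphi'$ for proper subformulas $\varphi'$ of $\psi$ and for $\alpha'$ ranging over $\Sigma$ (or the $E$-successors of $\alpha$).

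The base cases will be immediate. For $\psi = p$ the equivalence $\langle \alpha \rangle p \equiv Pre_\alpha \land p$ already lies in $\mathcal{L}_{\exists, U}$, and for $\psi = j_i$ Remark~\ref{rem:JRed} gives $Pre_\alpha$ when $\alpha = \alpha_i$ and $\bot$ otherwise.

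For the inductive cases other than $\exists p$, I will use standard reduction axioms verifiable directly from the semantics of $\otimes$: $\langle \alpha \rangle \neg \varphi \equiv Pre_\alpha \land \neg \langle \alpha \rangle \varphi$; $\langle \alpha \rangle (\varphi_1 \land \varphi_2) \equiv \langle \alpha \rangle \varphi_1 \land \langle \alpha \rangle \varphi_2$; $\langle \alpha \rangle \square \varphi \equiv Pre_\alpha \land \bigwedge_{\alpha E \alpha'} \square(Pre_{\alpha'} \implies \langle \alpha' \rangle \varphi)$; and $\langle \alpha \rangle U \varphi \equiv Pre_\alpha \land \bigwedge_{\alpha' \in \Sigma} U(Pre_{\alpha'} \implies \langle \alpha' \rangle \varphi)$. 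In each case the remaining $\langle \alpha' \rangle$-formulas are applied to proper subformulas of $\psi$, hence have strictly smaller $c$-component and the same $q$-component, so the inductive hypothesis applies and yields equivalent formulas in $\mathcal{L}_{\exists, U}$, which can be substituted back.

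The main obstacle, and the reason for the two-component measure, is the $\exists p$ case. Lemma~\ref{lem:MSOProd} rewrites $\langle \alpha \rangle \exists p. \varphi$ with the outer quantifiers $\exists p_0 \ldots \exists p_{n-1}$ and the $U$-conjuncts already in $\mathcal{L}_{\exists, U}$, but leaves a residual $\langle \alpha \rangle \varphi[\bigvee_i (p_i \land j_i)/p]$ whose inner formula is in general larger than $\exists p. \varphi$ in raw connective count, since the atom $p$ may be replaced at many positions by a Boolean combination of propositional atoms and action nominals. A naive structural induction on $\psi$ therefore fails. However, the substitution introduces no new quantifier, so $q(\varphi[\bigvee_i(p_i \land j_i)/p]) = q(\varphi) < q(\exists p. \varphi)$, which ensures $\mu$ strictly decreases in the lexicographic order, and the inductive hypothesis applies to reduce the residual to some $\chi' \in \mathcal{L}_{\exists, U}$. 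Plugging $\chi'$ back into the body produced by Lemma~\ref{lem:MSOProd} yields the desired $\chi$, completing the induction. Since every step in the construction is explicit and performed finitely many times, the resulting $\chi$ is effectively computable from $\psi$, as required by Theorem~\ref{thm:prod}.
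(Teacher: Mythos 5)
Your proposal is correct and matches the paper's own argument in all essentials: the paper likewise inducts on a measure in which the number of propositional quantifiers $\varepsilon(\psi)$ is dominant (with standard formula complexity as tiebreaker --- exactly your lexicographic $(q,c)$), handles the quantifier case via Lemma~\ref{lem:MSOProd} with the observation that the substitution $\varphi(\bigvee_i(p_i \land j_i))$ drops the quantifier count by one, and dispatches the nominal cases via Remark~\ref{rem:JRed}. The only difference is presentational: where you write out the BMS-style reduction axioms for $\neg$, $\land$, $\square$ and $U$ explicitly, the paper simply declares these ``old'' cases known.
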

\begin{proof}
We would like to prove this by induction on $\psi$, by using our reduction axioms, and using the following inductive hypothesis, where $\gamma$ is less complex than $\psi$:
\begin{equation}
\label{eqn:IH2}
 \exists \delta\in \mathcal{S}_{\exists, U, j}:~\vDash \delta \equiv \langle \alpha \rangle \gamma
\end{equation}
First we need an appropriate notion of complexity:
since at the new case to be treated at the inductive stage, the propositional quantifier case, the reduction axion \emph{increases} the complexity of the formula, by the standard definition of complexity,
we suppose we have a definition according to which formulas with greater quantifier depth have greater complexity,
and that within that stratification the standard notion of formula complexity applies.
Let $\varepsilon(\psi)$ be the number of propositional existential quantifiers $\exists$ in $\psi$.

Notice that we can immediately prove (\ref{eqn:IH2}) for the case when $\varepsilon(\psi) = 0$, for then $\psi \in \mathcal{L}_{U, j}$, so the new cases to be treated are just (a) $\psi:= j_i$ with $\alpha = \alpha_i$ and (b) $\psi:= j_i$ with $\alpha \neq \alpha_i$.  Here we use the `reduction axioms' from Remark \ref{rmk:JRed}: for (a), $\vDash \langle \alpha \rangle j_i \equiv \top$, and for (b) $\vDash \langle \alpha \rangle j_i \equiv \bot$.

Therefore it will suffice, to prove the Lemma, to suppose that:
\begin{equation}
\label{eqn:IH3}
\textup{(\ref{eqn:IH2}) holds for }\varepsilon(\psi) < k
\end{equation}
holds, and then to show that (\ref{eqn:IH2}) holds for $\varepsilon(\psi) = k$ ($k \in \mathbb{N}^+$).

Again, the `old' cases are known, so we go straight to the new case:

Let $\psi:= \exists p. \gamma(p)$.
Here we choose some $q_0, \ldots, q_{n-1}$ not occurring in either $\gamma$ or any of the $PRE_\alpha$'s,
and letting $\gamma' = \gamma(\bigvee_{i \in n}(q_i \land j_i))$, define $\varphi$ as follows:
\[
\varphi:= \exists q_0 \ldots \exists q_{n - 1}. (\bigwedge_{i \in n} U(q_i \implies Pre_{\alpha_i}) \land \langle \alpha \rangle \gamma'). 
\]
Then by Lemma \ref{lem:MSOProd}, $\vDash \varphi \equiv \langle \alpha \rangle \psi$.
Furthermore, notice that $\gamma' \in \mathcal{S}_{\exists, U, j}$,
and that $\varepsilon(\gamma') = \varepsilon(\psi) - 1 < \varepsilon(\psi)$.
So we can invoke our inductive hypothesis (\ref{eqn:IH3}),
establishing that there exists $\delta \in \mathcal{L}_{\exists, U}$ such that:
\begin{equation}
\label{eqn:lastEquiv}
\vDash \delta \equiv \langle \alpha \rangle \gamma'. 
\end{equation}
It immediately follows that $\vDash \theta \equiv \varphi$, where:
\[
\theta :=  \exists q_0 \ldots \exists q_{n - 1}. (\bigwedge_{i \in n} U (q_i \implies Pre_{\alpha_i}) \land \delta)
\]
Then since $\theta \in \mathcal{L}_{\exists, U}$ we are done.
\end{proof}

\section{A corollary for $\mathcal{L}_\exists$}
\label{sec:qml}

The language $\mathcal{L}_\exists$ is the language $\mathcal{L}_{\exists, U}$ without the global modality $U$.
The absence of a global modality means that $\mathcal{L}_\exists$ is a `local' language:
as observed in \cite{vB_MLCL},
$\mathcal{L}_\exists$ is invariant under generated submodels.
\cite{tC_SOPML} shows further that it is a \emph{proper} fragment of the generated-submodel-invariant sublanguage of $\mathcal{L}_{\exists, U}$
and indeed establishes a second-order analogue of the van Benthem-Rosen theorem,
characterising the expressivity of $\mathcal{L}_{\exists}$ as a fragment of $MSO$.
We need some more definitions in order to formulate the characterisation given in \cite{tC_SOPML} of $\mathcal{L}_\exists$ as a fragment of $\mathcal{L}_{\exists, U}$:
Firstly, for $k \in \mathbb{N}$, and any point $\omega$ in the domain of the model $\mathcal{M}$,
we denote by $\mathcal{M}^k_\omega$ the submodel of $\mathcal{M}$ generated from going at most $k$ steps along $R$ from $\omega$.
Let $\mathbb{K}(\varphi)$ denote the class of pointed models defined by $\varphi$,
i.e.~such that for any pointed\footnote{A `pointed model' is a model $(\Omega, R, V)$ together with a point $\omega \in \Omega$.} model $(\mathcal{M}, \omega)$, we have $(\mathcal{M}, \omega) \in \mathbb{K} \Leftrightarrow \mathcal{M}, \omega \vDash \varphi$.
We say that $\varphi$ has \df{degree} $k$ (for $k \in \mathbb{N}$) when
for all pointed models $(\mathcal{M}, \omega)$,
$(\mathcal{M}, \omega) \in \mathbb{K}(\varphi) \Leftrightarrow (\mathcal{M}^k_\omega, \omega) \in \mathbb{K}(\varphi)$.
If there is some $k \in \mathbb{N}$ such that $\varphi$ has degree $k$, then let $deg(\varphi)$ denote the least such $k$, and $\infty$ otherwise.
Then the following theorem characterises $\mathcal{L}_\exists$ as a fragment of $\mathcal{L}_{\exists, U}$:
\begin{theorem}[{\cite[Theorem 6]{tC_SOPML}}]
\label{thm:tC}
A formula $\varphi \in \mathcal{L}_{\exists, U}$ is equivalent to a formula of $\mathcal{L}_\exists$ just if $deg(\varphi) \neq \infty$
\end{theorem}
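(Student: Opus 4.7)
My plan is to prove the two directions separately.  For the easy direction, suppose $\varphi \in \mathcal{L}_{\exists, U}$ is equivalent to some $\psi \in \mathcal{L}_\exists$.  Since $\mathcal{L}_\exists$ contains no modality that escapes the local view, a straightforward induction on $\psi$ shows that its truth value at $(\mathcal{M}, \omega)$ depends only on $\mathcal{M}^m_\omega$, where $m$ is the modal depth of $\psi$; the propositional quantifier case uses the fact that changing $V(p)$ outside $\mathcal{M}^m_\omega$ is inert for $\psi$.  Hence $deg(\varphi) = deg(\psi) \leq m < \infty$.

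The substantive direction is the converse: assume $\varphi \in \mathcal{L}_{\exists, U}$ with $deg(\varphi) = k$, and construct an equivalent $\psi \in \mathcal{L}_\exists$.  This is a second-order analogue of the van Benthem-Rosen locality argument.  By the degree hypothesis, $\mathcal{M}, \omega \vDash \varphi$ iff $\mathcal{M}^k_\omega, \omega \vDash \varphi$, so I aim to produce an $\mathcal{L}_\exists$-formula that simulates evaluation on $\mathcal{M}^k_\omega$ from inside the full $\mathcal{M}$.  The tool is propositional quantification: existentially guess predicates $B, D_0, \ldots, D_k$ intended to mark the $k$-ball and its distance layers around $\omega$, and constrain them by purely local modal conditions (the current point lies in $D_0$; every $R$-successor of a $D_i$-point lies in $D_0 \lor \cdots \lor D_{i+1}$; $B = D_0 \lor \cdots \lor D_k$).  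Then translate $\varphi$ to $\varphi^*$ by relativising every quantifier to $B$: each $\exists p$ is restricted so that $p$ is forced into $B$, and each occurrence of $U\chi$ is replaced by a bounded conjunction of the form $\bigwedge_{i \leq d} \square^i (B \supset \chi^*)$, where $d$ bounds the diameter of $B$ from whichever point we might be evaluating at.

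The main obstacle is definability of the ball itself: without the global modality one cannot say `$D_0$ holds only at the current point' or `$B$ is exactly the $k$-ball' in a local language.  The workaround is to over-approximate rather than pin down: the constraints above only force $B, D_0, \ldots, D_k$ to \emph{contain} the relevant sets, not to coincide with them.  Correctness of the final formula then rests on showing that $\varphi^*$, evaluated with any such over-approximating $B$, yields the same answer as $\varphi$ evaluated in $\mathcal{M}^k_\omega$.  This is where the degree hypothesis does the real work: the translation must be arranged so that the extra material in $B$ beyond the true $k$-ball is semantically inert for $\varphi^*$, which requires a careful inductive bookkeeping of modal depth and quantifier scope.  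Once this invariance is established, the translation packages into a single $\mathcal{L}_\exists$-formula as required.
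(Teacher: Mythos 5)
You should know at the outset that the paper offers no proof of this statement: it is imported wholesale from ten Cate (\cite[Theorem 6]{tC_SOPML}) and used as a black box, so your attempt must stand on its own merits. Your easy direction is correct: a routine induction (with the quantifier case handled by restricting and extending witness sets across the inclusion of $\mathcal{M}^m_\omega$ into $\mathcal{M}$) shows that every $\mathcal{L}_\exists$-formula of modal depth $m$ has degree at most $m$. Part of your framing of the hard direction is also sound: if the guessed $B$ merely \emph{contains} the $k$-ball around $\omega$, then $(\mathcal{M}\restr B)^k_\omega = \mathcal{M}^k_\omega$, so applying the degree-$k$ hypothesis \emph{inside the model} $\mathcal{M}\restr B$ would indeed absorb the over-approximation --- provided your translation $\varphi^*$ genuinely computes the value of $\varphi$ in $\mathcal{M}\restr B$ at $\omega$.

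That proviso is where the argument breaks, specifically at the clause for $U$. The formula $\bigwedge_{i \leq d} \square^i(B \supset \chi^*)$, evaluated at a point $v$, quantifies only over points $R$-reachable from $v$ in at most $d$ steps, whereas the relativised $U$ must quantify over \emph{all} of $B$. This fails in two independent ways. First, your constraints only force $B \supseteq \mathcal{M}^k_\omega$, so $B$ may contain points arbitrarily far from, or simply unreachable from, the evaluation point; no $d$ ``bounds the diameter of $B$'', and exactness of $B$ is, as you yourself concede, not locally enforceable --- so the two halves of your workaround are in direct tension. Second, even if $B$ were exactly the $k$-ball, once evaluation has moved under boxes to an interior point $v$, the root and sibling branches need not be $R$-reachable from $v$ at all ($R$ is not assumed symmetric): at a dead-end point $v$ every conjunct with $i \geq 1$ holds vacuously, while the intended global statement may fail at a sibling that $\square$ cannot see. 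Nor can you rescue this by invoking the degree hypothesis in an induction on subformulas to declare the discrepancy ``semantically inert'': having degree $k$ is a property of the whole formula evaluated at the root, and subformulas of a finite-degree formula (notably those of the form $U\chi$) typically have infinite degree, so the inductive invariant you gesture at is not available. Handling occurrences of $U$ nested under interleaved boxes and propositional quantifiers is precisely the hard content of ten Cate's theorem, and it is the one step your sketch leaves both unspecified and, as stated, false.
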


We now want to show that having finite degree is a property that is preserved by adding action modalities:

\begin{lemma}
\label{lem:findeg}
If $\varphi$ and all of $PRE_{\alpha_0}, \ldots, PRE_{\alpha_n}$ have finite degree then  $\langle \alpha \rangle \varphi$ has finite degree.
\end{lemma}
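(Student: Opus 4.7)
The plan is to prove the quantitative bound $deg(\langle \alpha \rangle \varphi) \le k + d$, where $k := deg(\varphi)$ and $d := \max_i deg(PRE_{\alpha_i})$; both are finite by hypothesis. Concretely, I will establish that for every pointed model $(\mathcal{M}, \omega)$,
\[
(\mathcal{M}, \omega) \vDash \langle \alpha \rangle \varphi \Leftrightarrow (\mathcal{M}^{k + d}_\omega, \omega) \vDash \langle \alpha \rangle \varphi.
\]

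By the semantic clause for $\langle \alpha \rangle$, the left-hand side amounts to the conjunction of $\mathcal{M}, \omega \vDash PRE_\alpha$ and $\mathcal{M} \otimes \mathcal{A}, (\omega, \alpha) \vDash \varphi$. The first conjunct depends only on $\mathcal{M}^d_\omega$, which is contained in $\mathcal{M}^{k+d}_\omega$ with the same valuation and relation, so it is unaffected when $\mathcal{M}$ is replaced by $\mathcal{M}^{k+d}_\omega$. For the second conjunct, since $\varphi$ has degree $k$, its truth at $(\omega, \alpha)$ in any product is determined by the $k$-generated submodel of that product around $(\omega, \alpha)$. Thus it suffices to show
\[
(\mathcal{M} \otimes \mathcal{A})^k_{(\omega, \alpha)} = (\mathcal{M}^{k + d}_\omega \otimes \mathcal{A})^k_{(\omega, \alpha)}.
\]

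The remaining work is to justify this identity. Any point $(\omega', \alpha')$ reachable in at most $k$ steps from $(\omega, \alpha)$ in a product $\mathcal{M}' \otimes \mathcal{A}$ has $\omega'$ at $R$-distance at most $k$ from $\omega$ in $\mathcal{M}'$, so the candidate witnesses for the first coordinate live in the $k$-neighborhood of $\omega$, which agrees in $\mathcal{M}$ and in $\mathcal{M}^{k+d}_\omega$. The delicate observation is that whether such a pair actually belongs to the product requires $PRE_{\alpha'}$ to hold at $\omega'$, and this evaluation depends on $(\mathcal{M}')^d_{\omega'}$; by the triangle inequality, the $d$-neighborhood of a point within $k$ steps of $\omega$ lies inside the $(k+d)$-neighborhood of $\omega$, so each such precondition check is preserved under the truncation. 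Consequently both products contain the same pairs $(\omega', \alpha')$ within $k$ steps of $(\omega, \alpha)$, with the same induced edges and valuations, giving the displayed identity.

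The main obstacle is exactly this last piece of bookkeeping: tracking how the domain of the product depends on preconditions evaluated at peripheral points, and verifying that truncating $\mathcal{M}$ at radius $k + d$ (rather than merely $k$) gives enough slack to preserve those peripheral precondition evaluations, including along intermediate points of paths in the product. Granted that, everything else is a routine unfolding of the definitions of $\otimes$, of $\mathcal{M}^k_\omega$, and of $deg$.
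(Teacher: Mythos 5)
Your proposal is correct and follows essentially the same route as the paper's (very terse) proof: you use the same bound $k^\star = \max_i deg(PRE_{\alpha_i}) + deg(\varphi)$, and your identity $(\mathcal{M} \otimes \mathcal{A})^k_{(\omega,\alpha)} = (\mathcal{M}^{k+d}_\omega \otimes \mathcal{A})^k_{(\omega,\alpha)}$ is just the precise form of the paper's claim that $(\mathcal{M} \otimes \mathcal{A})^{deg(\varphi)}_{(\omega,\alpha)}$ sits inside $\mathcal{M}^{k^\star}_\omega \otimes \mathcal{A}$, obtained via the same monotonicity-of-degree observation. Indeed your write-up supplies exactly the bookkeeping (precondition checks at peripheral points along product paths) that the paper's sketch leaves implicit.
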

\begin{proof}
Notice that
\begin{equation}
\label{eqn:presdeg}
\textup{for any }m \leq k\textup{ then if $\varphi$ has degree $m$, $\varphi$ also has degree $k$.}
\end{equation}
Let $k^\star = max(\{deg(PRE_{\alpha_0}), \ldots, deg(PRE_{\alpha_n}) \} + deg(\varphi)$.
Then (\ref{eqn:presdeg}) is used to show that
\[(\mathcal{M} \otimes \mathcal{A})_{(\omega, \alpha)}^{deg(\varphi)} \textup{ is a submodel of } (\mathcal{M}_\omega^{k^\star} \otimes \mathcal{A}),\]
which is in turn used to prove that
$deg(\langle \alpha \rangle \varphi) \leq k^\star$.
\end{proof}

Then as a corollary of Theorem \ref{thm:prod} we also obtain the closure of the propositionally quantified modal language $\mathcal{L}_\exists$:

\begin{corollary}
\label{cor:qml}
$\mathcal{L}_\exists$ is closed for product update.  I.e.:
\[
\forall \alpha \in \Sigma, \forall \psi \in \mathcal{L}_{\exists}, \exists \chi \in \mathcal{L}_{\exists}:~\vDash \chi \equiv \langle \alpha \rangle \psi\textup,
\]
\end{corollary}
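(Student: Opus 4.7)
The plan is to bootstrap from the corresponding $\mathcal{L}_{\exists,U}$ result, Theorem \ref{thm:prod}, via the characterisation Theorem \ref{thm:tC}. Fix $\psi \in \mathcal{L}_\exists$, and assume throughout that we are working with an event model whose preconditions $PRE_{\alpha_0},\ldots,PRE_{\alpha_{n-1}}$ all lie in $\mathcal{L}_\exists$; this is the natural ambient hypothesis when the base language is $\mathcal{L}_\exists$.

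First, since $\mathcal{L}_\exists \subseteq \mathcal{L}_{\exists,U}$, Theorem \ref{thm:prod} supplies some $\chi' \in \mathcal{L}_{\exists,U}$ with $\vDash \chi' \equiv \langle \alpha \rangle \psi$. To convert $\chi'$ into an equivalent $\chi \in \mathcal{L}_\exists$ I would appeal to Theorem \ref{thm:tC}, which reduces the task to verifying that $deg(\chi') \neq \infty$. Since degree is defined in terms of the class $\mathbb{K}(\cdot)$, equivalent formulas share the same degree, so it suffices to establish $deg(\langle \alpha \rangle \psi) < \infty$.

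That step is exactly Lemma \ref{lem:findeg}, provided I can check its hypotheses: that $\psi$ and each $PRE_{\alpha_i}$ have finite degree. Both hold by the easy direction of Theorem \ref{thm:tC}, since any formula of $\mathcal{L}_\exists$ is trivially equivalent to itself, an $\mathcal{L}_\exists$-formula, and so must have finite degree. Applying Lemma \ref{lem:findeg} then yields $deg(\langle \alpha \rangle \psi) < \infty$, hence $deg(\chi') < \infty$, and Theorem \ref{thm:tC} produces the desired $\chi \in \mathcal{L}_\exists$.

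I do not anticipate a genuine obstacle here, as the argument is purely a composition of already-established facts. The closest thing to a subtlety worth flagging in the write-up is the invariance of $deg(\cdot)$ under logical equivalence (so that $\chi'$ inherits the finite degree of $\langle \alpha \rangle \psi$), and the standing assumption that the event model's preconditions belong to $\mathcal{L}_\exists$ in the first place. A computable version of the corollary, were one desired, would additionally require that the translation in Theorem \ref{thm:tC} be effective, which is not explicitly asserted in the excerpt; that is why the statement of Corollary \ref{cor:qml} omits the word `computably' that appears in Theorem \ref{thm:prod}.
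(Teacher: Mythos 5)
Your proposal is correct and follows essentially the same route as the paper's own proof: obtain an equivalent $\mathcal{L}_{\exists,U}$-formula via Theorem \ref{thm:prod}, show $\langle \alpha \rangle \psi$ has finite degree via Lemma \ref{lem:findeg}, transfer finite degree to the equivalent formula, and conclude by Theorem \ref{thm:tC}. If anything you are slightly more careful than the paper, which leaves implicit both the standing assumption that the preconditions lie in $\mathcal{L}_\exists$ (needed to meet the hypotheses of Lemma \ref{lem:findeg}) and the invariance of $deg(\cdot)$ under logical equivalence.
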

\begin{proof}
Take $\varphi \in \mathcal{L}_{\exists}$ and consider $\langle \alpha \rangle \varphi$; since $\mathcal{L}_{\exists} \subset \mathcal{L}_{\exists, U}$, then by Theorem \ref{thm:prod} there is a formula $\psi' \in \mathcal{L}_{\exists, U}$ that is equivalent to $\langle \alpha \rangle \varphi$.
Since $\varphi \in \mathcal{L}_\exists$, we know that $\varphi$ has finite degree,
and then, by Lemma \ref{lem:findeg} that $\langle \alpha \rangle \varphi$ also has finite degree.
Therefore, since $\psi'$ and $\langle \alpha \rangle \varphi$ are equivalent, $\psi'$ has finite degree, and therefore by Theorem \ref{thm:tC} is equivalent to some $\psi \in \mathcal{L}_\exists$.
\end{proof}

\section{A corollary for $\mathcal{L}_\mu$}

In this section we observe that, similarly, the relativisation closure of the modal fixpoint language $\mathcal{L}_\mu$ is a corollary of our Theorem \ref{thm:prod}.  The closure of $\mathcal{L}_\mu$ is already obtained in \cite{vBI_PDLMuClosed}.  The proof of those authors is more direct, but our proof, just as for $\mathcal{L}_\exists$ in the previous section, is very quick because it again uses a semantic characterisation result.
$\mathcal{L}_\mu$ is defined as follows:
\[
 \varphi ::= p \mid \neg \varphi \mid \varphi \land \varphi \mid \square \varphi \mid \nu p. \varphi,
\]
where crucially in $\nu p. \varphi$, $\varphi$ must be \emph{positive} in $p$ (i.e. all occurrences of $p$ must be under the scope of an even number of negations $\neg$).  The new semantic clause for $\nu$ is as follows:
\[
\llbracket \nu p. \varphi \rrbracket =
\bigcup \{ X \subseteq \Omega \mid \forall \omega \in X, \mathcal{M}[p \mapsto X], \omega \vDash \varphi \}
\]
The $\nu$ operator expresses \emph{greatest fixpoints}.%
\footnote{More specifically: $\llbracket \nu p. \varphi(p) \rrbracket$ is the greatest fixpoint of the function $F_\varphi: 2^\Omega \rightarrow 2^\Omega$ defined by $F_{\varphi}(X) = \llbracket \varphi(p) \rrbracket_{\mathcal{M}[p \mapsto X]}$.  The requirement that $\varphi$ be positive in $p$ ensures that $F_{\varphi}$ is monotone and therefore has, as a consequence of the Knaster-Tarski theorem, a greatest fixpoint.}
It is easy to see that $\mathcal{L}_\mu$ is a strict fragment of $\mathcal{L}_{\exists, U}$, since we can define $\nu p. \varphi$ as
\[
\exists p (p \land U(p \implies \varphi)).
\]
Theorem \ref{thm:JW} below is a more remarkable result, precisely characterising $\mathcal{L}_\mu$ in terms of $\mathcal{L}_{\exists, U}$ is proved in \cite{JW_MuIsBis}.
We say that a modal formula $\varphi$ is \df{bisimulation-invariant} just if
for any pointed models $(\mathcal{M}, \omega)$ and $(\mathcal{M}', \omega')$ that are bisimilar%
\footnote{For a definition of the fundamental modal logic notion of bisimulation, see e.g.\ \cite{BdRV_ML}.}%
, $\mathcal{M}, \omega \vDash \varphi \Leftrightarrow \mathcal{M}', \omega' \vDash \varphi$.
\begin{theorem}
\label{thm:JW}
A formula $\varphi \in \mathcal{L}_{\exists, U}$ is equivalent to an $\mathcal{L}_\mu$-formula iff it is bisimulation-invariant.
\end{theorem}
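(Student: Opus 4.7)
The plan is to prove the two directions of the biconditional separately. The forward direction (every $\mathcal{L}_\mu$-formula is bisimulation-invariant) is the routine induction, while the reverse direction is the deep half, essentially the content of the Janin--Walukiewicz theorem, and would require the full apparatus of parity tree automata.

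For the easy direction I would induct on the construction of $\varphi \in \mathcal{L}_\mu$. The atomic, Boolean and $\square$ cases use only the standard back-and-forth properties of a bisimulation $Z$ between $(\mathcal{M}, \omega)$ and $(\mathcal{M}', \omega')$. For $\nu p. \varphi$ the trick is to observe that $Z$ remains a bisimulation between $\mathcal{M}[p \mapsto X]$ and $\mathcal{M}'[p \mapsto X']$ whenever $X$ and $X'$ are $Z$-matched. Inductively, if $F_\varphi$ and $F'_\varphi$ denote the monotone operators on $\mathcal{M}$ and $\mathcal{M}'$ respectively, then $Z$-matched inputs produce $Z$-matched outputs; iterating from the top down the ordinals and using continuity at limit stages, one shows the greatest fixpoints are themselves $Z$-matched, which gives the $\nu$-case.

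For the hard direction, let $\varphi \in \mathcal{L}_{\exists, U}$ be bisimulation-invariant. I would first unravel models to trees of bounded branching (which is legitimate up to bisimulation) and translate $\varphi$, viewed as an MSO sentence, into an equivalent alternating parity tree automaton $\mathcal{T}_\varphi$, using the classical Rabin-style correspondence between MSO on trees and parity automata. The heart of the proof is then to exploit bisimulation-invariance to replace $\mathcal{T}_\varphi$ by an equivalent \emph{symmetric} (or \emph{bisimulation-invariant}) parity automaton $\mathcal{T}^\flat_\varphi$ whose transitions do not inspect the order or multiplicity of successors. Finally I would convert $\mathcal{T}^\flat_\varphi$ into an $\mathcal{L}_\mu$-formula in the standard way: introduce a propositional variable for each automaton state, encode the transition function as a positive modal formula, and bind the variables by nested $\mu$s and $\nu$s following the parity ranks from lowest to highest.

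The main obstacle is unquestionably the symmetrisation step. Given an arbitrary parity automaton whose language happens to be closed under bisimulation, producing a genuinely symmetric automaton with the same language is delicate: the Janin--Walukiewicz argument proceeds via disjunctive normal forms for alternating automata together with a powerset-style construction that identifies successors up to bisimulation, and the correctness proof is a non-trivial game-theoretic simulation argument. Reproducing this carefully would be a paper in itself, so in practice I would only sketch the pipeline above and defer the technical symmetrisation to \cite{JW_MuIsBis}; this is presumably why the author introduces the theorem as a cited black box rather than proving it in situ.
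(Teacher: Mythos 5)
Your proposal is correct and takes essentially the same route as the paper, which likewise disposes of the theorem in one line by citing \cite[Theorem 11]{JW_MuIsBis} for the hard direction and noting that $\mathcal{L}_\mu$-formulas are bisimulation-invariant for the easy one. The only small caveat is in your sketch of the $\nu$-case: no continuity assumption is needed (or available) at limit stages of the top-down approximation --- monotonicity together with the observation that intersections of $Z$-matched families are again $Z$-matched already suffices.
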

\begin{proof}Theorem \ref{thm:JW} follows immediately from \cite[Theorem 11]{JW_MuIsBis}, given that $\mathcal{L}_\mu$ formulas are bisimulation-invariant.\end{proof}
That elegant result is what leads us to see that $\mathcal{L}_\mu$ is also closed for product update.  We just need one additional lemma, which says that adding product-update modalities does not break bisimulation-invariance:
\begin{lemma}
\label{Lem:ProdBis}
If $\varphi$ and all of $PRE_{\alpha_1}, \ldots, PRE_{\alpha_n}$ are bisimulation-invariant then  $\langle \alpha \rangle \varphi$ is bisimulation-invariant.
\end{lemma}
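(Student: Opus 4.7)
The plan is to unfold the semantics of $\langle \alpha \rangle \varphi$, so that bisimulation-invariance of $\langle \alpha \rangle \varphi$ reduces to two claims: (i) the precondition $Pre_\alpha$ is bisimulation-invariant (given by hypothesis), and (ii) whenever $(\mathcal{M}, \omega)$ and $(\mathcal{M}', \omega')$ are bisimilar and both satisfy $Pre_\alpha$, the pointed products $(\mathcal{M} \otimes \mathcal{A}, (\omega, \alpha))$ and $(\mathcal{M}' \otimes \mathcal{A}, (\omega', \alpha))$ are themselves bisimilar. Given (ii), bisimulation-invariance of $\varphi$ closes the argument at once. The case where $\omega$ (equivalently $\omega'$) fails $Pre_\alpha$ is trivial since both sides of the desired equivalence are then false.

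To establish (ii), I would fix a bisimulation $Z$ between $\mathcal{M}$ and $\mathcal{M}'$ witnessing $\omega Z \omega'$, and lift it to the products by defining
\[
Z' = \{ ((u, \beta), (v, \beta)) \mid u Z v \textup{ and } (u, \beta) \in \mathcal{M} \otimes \mathcal{A} \}.
\]
Bisimulation-invariance of each $Pre_{\alpha_i}$ means that when $uZv$ holds, $(u, \beta)$ survives in $\mathcal{M} \otimes \mathcal{A}$ if and only if $(v, \beta)$ survives in $\mathcal{M}' \otimes \mathcal{A}$, so $Z'$ is symmetric in the two sides despite the asymmetric formulation. In particular $(\omega, \alpha) Z' (\omega', \alpha)$. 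Atomic harmony of $Z'$ is immediate because the product valuation assigns $p$ to $(u, \beta)$ exactly when $u \in V(p)$, and $u Z v$ gives $u \in V(p) \Leftrightarrow v \in V'(p)$.

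The main obstacle is verifying the forth clause (back is symmetric) for $Z'$: given $(u, \beta) Z' (v, \beta)$ and a product-successor $(u_1, \beta_1)$ of $(u, \beta)$, I need a product-successor $(v_1, \beta_1)$ of $(v, \beta)$ with $(u_1, \beta_1) Z' (v_1, \beta_1)$. From the product definition, $u R u_1$ and $\beta E \beta_1$; the forth clause on $Z$ supplies $v_1$ with $v$ accessing $v_1$ in $\mathcal{M}'$ and $u_1 Z v_1$, and the event component $\beta E \beta_1$ carries over unchanged. This is exactly the point where the precondition hypothesis of the lemma is used: to conclude that $(v_1, \beta_1)$ actually survives in $\mathcal{M}' \otimes \mathcal{A}$ I need $\mathcal{M}', v_1 \vDash Pre_{\beta_1}$, and this follows from $\mathcal{M}, u_1 \vDash Pre_{\beta_1}$ (which holds since $(u_1, \beta_1)$ is in the product) by bisimulation-invariance of $Pre_{\beta_1}$. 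Hence $Z'$ is a genuine bisimulation, and bisimulation-invariance of $\varphi$ delivers the result.
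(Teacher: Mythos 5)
Your proposal is correct and follows essentially the same route as the paper: lift the given bisimulation $Z$ to the products by relating $(u,\beta)$ to $(v,\beta)$ exactly when $uZv$, use bisimulation-invariance of the preconditions to see that this relation genuinely connects points of the two product models and that the forth/back clauses go through, and then conclude by bisimulation-invariance of $\varphi$. The only difference is that you spell out the verification (atomic harmony, forth clause, survival of successors) that the paper leaves implicit with ``can be shown to be a bisimulation.''
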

\begin{proof}
Take any bisimilar models $\mathcal{M}$ and $\mathcal{M}'$, and a point $\omega$ and $\omega'$ in the domain of each.
It will suffice to show that $\mathcal{M}, \omega \vDash \varphi$ iff $\mathcal{M}', \omega' \vDash \varphi$.
Let $Z$ be a bisimulation for those models, with $\omega Z \omega'$.
Using the fact that the $PRE_{\alpha_i}$'s are bisimulation-invariant,
the relation $Y$ defined as follows can be shown to be a bisimulation between
$\mathcal{M}\otimes \mathcal{A}$ and $\mathcal{M}' \otimes \mathcal{A}$:
\[
(s, \beta) Y (t, \gamma) \, \dfn{\Leftrightarrow} \ s Z t \ \& \ \beta = \gamma.
\]
Then since $\varphi$ is also bisimulation-invariant, we know that
$\mathcal{M} \otimes \mathcal{A}, (\omega, \alpha) \vDash \varphi$ iff $\mathcal{M}' \otimes \mathcal{A}, (\omega', \alpha) \vDash \varphi$,
which is equivalent to what we set out to establish.
\end{proof}

\begin{corollary}
\label{cor:mu}
$\mathcal{L}_\mu$ is closed for product update, i.e.:
\[
\forall \alpha \in \Sigma, \forall \psi \in \mathcal{L}_{\mu}, \exists \chi \in \mathcal{L}_{\mu}:~\vDash \chi \equiv \langle \alpha \rangle \psi\textup,
\]
\end{corollary}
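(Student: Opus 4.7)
The plan is to follow exactly the pattern of the proof of Corollary \ref{cor:qml}, with Theorem \ref{thm:JW} playing the role of Theorem \ref{thm:tC} and Lemma \ref{Lem:ProdBis} playing the role of Lemma \ref{lem:findeg}. Concretely, I would start from an arbitrary $\psi \in \mathcal{L}_\mu$ and consider $\langle \alpha \rangle \psi$. Since $\mathcal{L}_\mu$ embeds into $\mathcal{L}_{\exists, U}$ via the translation $\nu p. \varphi \mapsto \exists p (p \land U (p \implies \varphi))$ noted in the text, I can regard $\psi$ as a formula of $\mathcal{L}_{\exists, U}$, and Theorem \ref{thm:prod} then supplies a formula $\psi' \in \mathcal{L}_{\exists, U}$ with $\vDash \psi' \equiv \langle \alpha \rangle \psi$.

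The second step is to promote $\psi'$ from $\mathcal{L}_{\exists, U}$ back into $\mathcal{L}_\mu$ using the characterisation in Theorem \ref{thm:JW}. For that I need $\psi'$ to be bisimulation-invariant. Bisimulation-invariance of $\psi$ itself is automatic since $\psi \in \mathcal{L}_\mu$, and the preconditions $PRE_{\alpha_i}$ of the event model are likewise bisimulation-invariant because, implicitly, to speak of product update over $\mathcal{L}_\mu$ we take the preconditions from $\mathcal{L}_\mu$. Lemma \ref{Lem:ProdBis} then gives that $\langle \alpha \rangle \psi$ is bisimulation-invariant, and transferring this along the equivalence yields that $\psi'$ is bisimulation-invariant as well.

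Finally, Theorem \ref{thm:JW} applied to $\psi'$ delivers some $\chi \in \mathcal{L}_\mu$ with $\vDash \chi \equiv \psi'$, and then by transitivity $\vDash \chi \equiv \langle \alpha \rangle \psi$, which is what the corollary asserts.

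The main obstacle here is only conceptual rather than computational: one has to notice that the same three-step recipe (embed into MSO, apply product closure of MSO, use a semantic characterisation to come back down into the fragment) works verbatim for $\mathcal{L}_\mu$, provided one has in hand both a preservation lemma for the semantic property that pins down the fragment (bisimulation-invariance, handled by Lemma \ref{Lem:ProdBis}) and a characterisation theorem for the fragment inside $\mathcal{L}_{\exists, U}$ (Theorem \ref{thm:JW}). No new calculation is required.
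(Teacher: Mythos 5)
Your proposal is correct and is precisely the argument the paper intends: the paper's proof of Corollary \ref{cor:mu} simply says it is analogous to that of Corollary \ref{cor:qml}, and your three-step recipe (embed into $\mathcal{L}_{\exists, U}$, apply Theorem \ref{thm:prod}, then use Lemma \ref{Lem:ProdBis} together with the characterisation Theorem \ref{thm:JW} to descend back into $\mathcal{L}_\mu$) is exactly that analogy spelled out. Your explicit remark that the preconditions $PRE_{\alpha_i}$ must be taken to be bisimulation-invariant (e.g.\ from $\mathcal{L}_\mu$) makes a hypothesis precise that the paper leaves implicit.
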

\begin{proof}
The proof is analogous to that of Corollary \ref{cor:qml}.
\end{proof}

\section{Conclusion}

We have shown that monadic second-order logic is closed for product update.
We did this using `action nominals', which served as a sort of `memory' in the reduction axiom process.
This book-keeping allowed us to keep track of which action had occurred,
and therefore to `talk about' subsets of the product model
in terms of sequences of subsets of the initial model.
Although we pass via a language with action nominals,
Lemma \ref{lem:jElim} does specify a method that, given a formula $\psi \in \mathcal{L}_{\exists, U}$
yields a concrete formula $\chi \in \mathcal{L}_{\exists, U}$ that is equivalent to $\langle \alpha \rangle \psi$.

We obtained as corollaries the facts that propositionally quantified modal logic,
and the modal fixpoint calculus, are closed for product update.
This was in each case achieved by using a semantic characterisation result.
The non-constructive nature of our proofs of these corollaries, unlike that of Lemma \ref{lem:jElim},
mean that they do not establish that either propositionally quantified modal logic or the modal fixpoint calculus are \emph{computably} closed.

\section*{Acknowledgements}
Participants in the ILLC's Dynamic Logic seminar series provided helpful comments,
especially Johan van Benthem, who suggested Corollary \ref{cor:mu}.

\bibliographystyle{named}
\bibliography{biblio.bib}

\end{document}